      \def\dR{{\mathbb R}}
   \def\dZ{{\mathbb Z}}
\def\cP{{\mathcal P}}
\def\bm\chi{\mbox{\boldmath$\chi$}}
\def\diag{{\rm diag\,}}
\let\xker=\ker \def\ker{{\xker\,}}
\def\deg{\operatorname{deg}}
\newtheorem{theorem}{Theorem}[section]
\newtheorem{proposition}[theorem]{Proposition}
\newtheorem{definition}[theorem]{Definition}
\theoremstyle{remark}
\newtheorem{remark}[theorem]{Remark}
\numberwithin{equation}{section}
\begin{document}

\title[A note on the GT and Sobolev OPs]{A note on the Geronimus
transformation and Sobolev orthogonal polynomials}

\date{\today}

\author{Maxim Derevyagin}
\address{
Maxim Derevyagin\\
Department of Mathematics MA 4-2\\
Technische Universit\"at Berlin\\
Strasse des 17. Juni 136\\
D-10623 Berlin\\
Germany}
\email{derevyagin.m@gmail.com}

\author{Francisco Marcell\'an}
\address{
Francisco Marcell\'an\\
Departamento de Matem\'aticas\\
Universidad Carlos III de Madrid\\
Avenida de la Universidad 30 \\
28911 Legan\'es\\
Spain}
\email{pacomarc@ing.uc3m.es}

%\thanks{}
 \subjclass{Primary 42C05; Secondary 15A23.}
\keywords{Orthogonal polynomials, Geronimus transformation, Sobolev inner products, Cholesky decomposition, Jacobi matrices.}

\begin{abstract}
In this note we recast the Geronimus transformation in the framework of polynomials orthogonal with respect to
symmetric bilinear forms. We also show that the double Geronimus transformations lead to
non-diagonal Sobolev type  inner products.
\end{abstract}
\maketitle
%%%%%%%%%%%%%%%%%%%%%%%%%%%%%%%%%%%%%%%%%%%%%%%%%%%%%%%%%%%%%%%%%%%%%%%%%%%%%%%%%%

\section{Introduction}

Let us consider the following problem. Let $ \{P_{n} \}_{n=0}^{\infty}$ be a sequence of monic
polynomials orthogonal with respect to a nontrivial probability measure supported on an infinite subset of the real line. The problem consists in finding necessary and sufficient conditions
for the real numbers $A_n$, $n=0,1,\dots$, to make the sequence of monic polynomials
\[
P_n(t)+A_nP_{n-1}(t), A_n \neq 0, \quad n=1, 2, \dots,
\]
orthogonal with respect to some measure supported on the real line. The idea of studying this problem goes back to Shohat's paper~\cite{Sh37} concerning quadrature formulas associated to $n$ nodes with a degree of exactness less that $2n-1$.
A few years later after the Shohat's publication a complete and final answer to that problem was given by
Geronimus~\cite{Ger40}.
Thus~\cite{Ger40} provided us with a procedure of constructing new families of orthogonal polynomials from the given ones.
One can also reduce some families of orthogonal polynomials to the known ones with the help
of such a procedure.

Recall that if we have a sequence of monic orthogonal polynomials $\{P_n\}_{n=0}^{\infty}$ then
the polynomial transformation
\[
P_n(t)\to P_n(t)+A_nP_{n-1}(t), A_n\neq 0, \quad n=1,2, \dots,
\]
that gives a new family of orthogonal polynomials, is said to be the Geronimus transformation~\cite{BM04,SpZh95, Zh}.
In fact, the Geronimus transformation divides the measure of orthogonality by the spectral
parameter minus the point of transformation and adds a mass to it at the point of transformation. See also \cite{Maro},
where the sequence of polynomials associated with such a perturbation in a more general algebraic  framework
(orthogonality with respect to a linear functional defined in the linear space of polynomials with complex coefficients)
 is studied.

Besides the measure interpretation, the Geronimus transformation can be also interpreted in terms of Jacobi matrices
in the framework of the so called discrete Darboux transformations and it is related to $LU$ and $UL$ factorizations of
shifted Jacobi matrices~\cite{BM04}. Although the Geronimus transformation has its origin in
mechanical quadrature~\cite{Sh37}, it has also found many applications in classical analysis, numerical analysis, and
physics~\cite{BM04,SpZh95,SpZh97}. In particular, it should be stressed that the Geronimus transformation together
with the Christoffel
transformation (both called discrete Darboux transformations) give a bridge between orthogonal polynomials and discrete
integrable systems~\cite{SpZh95,SpZh97}.

To go deeper in understanding the Geronimus transformation it is somehow natural to consider its iterations.
Say, two iterations of the Geronimus transformation lead to the families of orthogonal polynomials defined by
\[
P_n(t)\to P_n(t)+B_nP_{n-1}(t)+C_nP_{n-2}(t), n\geq 1, C_n\neq 0, n\geq 2.
\]
Such families have been extensively studied in the literature (see \cite{APRM, BM, HHR}, among others).  A particular case of the corresponding inverse problem in terms of perturbations of linear functionals has been analyzed in \cite{BegMaro}.
For more iterations of the Geronimus transformation, see the results contained in \cite{AMPR, KLMP, MaroSfax}.
Some particular cases of inverse problems for the cubic case have been analyzed in \cite{MaroNic}. On the other hand,
in \cite{Il} the higher order ordinary linear differential equations associated with polynomials orthogonal with
respect to iterated Geronimus transformations of Laguerre orthogonal polynomials, the so called Krall-Laguerre
orthogonal polynomials, are studied in a framework of commutative algebras with orthogonal polynomials as eigenfunctions.

An interesting point in analysis of iterations of the Geronimus transformation is the following. It is well known that
the sequence of monic polynomials  $\{\widetilde{Q}^{(\alpha)}_n \}_{n=0}^{\infty}$,
which are orthogonal with respect to the Laguerre-Sobolev type  inner product
\[
[f,g]=\int_{0}^{\infty}f(t)g(t)t^{\alpha}e^{-t}dt+Mf(0)g(0)+Nf'(0)g'(0)\quad f,g\in\cP
\]
defined on the linear space  $\cP$ of polynomials with real coefficients, can be represented in terms of the sequence of classical monic Laguerre polynomials $\{L_n^{(\alpha)}\}_{n=0}^{\infty}$ as follows
\[
\widetilde{Q}^{(\alpha)}_{n}(t)=L_n^{(\alpha+2)}(t)+B_nL_{n-1}^{(\alpha+2)}(t)+C_nL_{n-2}^{(\alpha+2)}(t).
\]
Obviously, one cannot get the Laguerre-Sobolev inner product by dividing  by $t^{2}$ the  measure $ t^{\alpha+2}e^{-t}dt$  and adding masses to it despite the above formula suggests that the Laguerre-Sobolev type  orthogonal polynomials are the two consecutive Geronimus transformations of the classical Laguerre polynomials. This problem brings us to one of the aims of
this note.

One of the  main ideas of the present paper is to include the Laguerre-Sobolev type  orthogonal polynomials and similar Sobolev orthogonal polynomials into the scheme of  Darboux transformations. To this end we propose to reconsider the Geronimus transformation in a more general framework related to symmetric bilinear forms.

Recall that a symmetric bilinear form $B(\cdot,\cdot)$ in the linear space  $\cP$  is a mapping
\[
B(\cdot,\cdot): \cP\times\cP\to\dR
\]
that is linear with respect to each of their arguments and has the symmetry property
\[
B(f,g)=B(g,f),\quad f,g\in\cP.
\]
For instance, the form
\[
(f,g)_0=\int_{\dR} f(t)g(t)d\mu(t)
\]
is symmetric and bilinear. It is not so hard to see that the Gram matrix $\left((t^i,t^j)_0\right)_{i,j=0}^{\infty}$ is
a Hankel matrix and is positive definite.

 A bilinear form is said to be regular (resp. positive definite) if all leading principal submatrices of its Gram matrix
are nonsingular (positive definite). In such cases, the bilinear form generates a sequence of
monic orthogonal polynomials in a simple way by using the Gram-Schmidt process.
Nonetheless, the main advantage of considering bilinear forms in the context of orthogonality
is the ability to include many types of orthogonality such that the corresponding Gram matrix associated with
their moments  is not a Hankel matrix, e.g.
the Sobolev orthogonality (see~\cite{BulBar,BM06}) and other types of orthogonality related to matrix
measures (see~\cite{Dur93}) based on the symmetry of a polynomial operator with respect to a bilinear form.

The paper is organized as follows. In Section~2 the classical Geronimus transformation is considered.
The structure of the symmetric Jacobi matrix corresponding to the transformed polynomials is discussed in the next
section. The double Geronimus transformation in the framework of bilinear forms is presented in Section~4.
The last section gives details of the structure of the symmetric pentadiagonal matrix associated with the recurrence coefficients for the transformed polynomials.

\section{The classical Geronimus transformations}

In this section we review some of the results of~\cite{Ger40} from the point of view of symmetric bilinear forms.

We start with the precise definition of the Geronimus transformation in the framework under consideration.
\begin{definition}\label{SGtrans}Let us consider a symmetric bilinear  form
\[
(f,g)_0=\int_{\dR} f(t)g(t)d\mu(t).
\]
The Geronimus transformation of  $(\cdot,\cdot)_0$ is a symmetric bilinear  form $[\cdot,\cdot]_1$  defined on the set $\cP$ of real polynomials as follows
\begin{equation}\label{defSGT}
[tf(t),g(t)]_1=[f(t),tg(t)]_1=(f,g)_{0}=\int_{\dR} f(t)g(t)d\mu(t), \quad f,g\in{\mathcal P}.
\end{equation}
\end{definition}
Evidently, this definition doesn't determine $[\cdot,\cdot]_1$ uniquely. However, we can see how the Geronimus transformation looks like.
\begin{proposition}\label{SG_h2}
 Suppose that $d\mu$ has the following representation
 \begin{equation}\label{SG_h1}
  d\mu(t)=td\mu_1(t),
 \end{equation}
where $d\mu_1$ is a positive measure and it has finite moments. Then the bilinear form $[\cdot,\cdot]_1$ admits
the representation
\begin{equation}\label{SG_FforBF}
 [f,g]_1=\int_{0}^{\infty}f(t)g(t)d\mu_1(t)+\left(s_0^*-\int_{0}^{\infty}d\mu_1(t)\right)f(0)g(0),\quad f,g\in{\mathcal P},
\end{equation}
where $s_0^*$ is an arbitrary real number.
\end{proposition}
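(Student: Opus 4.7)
The plan is to reduce the claim to a moment-matching verification. First I would observe that the defining relation \eqref{defSGT} essentially determines all moments $s_{i,j} := [t^i, t^j]_1$: substituting $f = t^i$, $g = t^j$ into \eqref{defSGT} yields
\[
s_{i+1,j} = s_{i,j+1} = \int_\dR t^{i+j} d\mu(t),
\]
so $s_{i,j}$ is forced to equal the $(i+j-1)$-th moment of $d\mu$ for every pair $(i,j) \neq (0,0)$. The only degree of freedom is the value $s_{0,0}$, which I would identify with the arbitrary real parameter $s_0^*$.

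Next, I would verify that the right-hand side of \eqref{SG_FforBF} defines a symmetric bilinear form satisfying \eqref{defSGT} with $[1,1]_1 = s_0^*$. Bilinearity and symmetry are immediate from the formula. For the defining relation, substituting $tf$ for the first argument makes the boundary term $(tf)(0)g(0) = 0$ vanish, leaving $\int_0^\infty t f(t) g(t) d\mu_1(t)$. Since $d\mu(t) = t d\mu_1(t)$, and since the positivity of both $d\mu$ and $d\mu_1$ forces $d\mu_1$ to be supported on $[0,\infty)$, this integral equals $\int_\dR f(t) g(t) d\mu(t) = (f,g)_0$, so $[tf, g]_1 = (f,g)_0$; the equality $[f, tg]_1 = (f,g)_0$ then follows by symmetry.

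Finally, setting $f = g = 1$ in \eqref{SG_FforBF} gives $[1,1]_1 = s_0^*$, matching the free parameter identified in the first step. Since two bilinear forms on $\cP \times \cP$ that satisfy \eqref{defSGT} and agree at $(1,1)$ must share every moment $s_{i,j}$, they must coincide. This shows that every Geronimus transform of $(\cdot,\cdot)_0$ has the form \eqref{SG_FforBF} for some real $s_0^*$, and conversely every real $s_0^*$ yields a valid Geronimus transform.

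The main obstacle, such as it is, is conceptual rather than computational: recognizing that \eqref{defSGT} underdetermines $[\cdot,\cdot]_1$ by exactly one real parameter, and that this parameter is precisely the mass the formula \eqref{SG_FforBF} places at the origin (shifted by $\int_0^\infty d\mu_1$). Once this is understood, the proof reduces to the direct verifications above.
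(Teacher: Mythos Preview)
Your proof is correct but takes a different route from the paper. The paper proceeds constructively: it sets $s_0^* := [1,1]_1$, then expands $[f,g]_1$ by splitting $f(t) = (f(t)-f(0)) + f(0)$ and likewise for $g$, uses \eqref{defSGT} to rewrite the pieces containing $f(t)-f(0)$ or $g(t)-g(0)$ (each divisible by $t$) as integrals against $d\mu$, and finally substitutes $d\mu = t\,d\mu_1$ to arrive at \eqref{SG_FforBF}. You instead argue by verification and uniqueness: \eqref{defSGT} determines every moment $[t^i,t^j]_1$ except $[1,1]_1$, so it suffices to check that the right-hand side of \eqref{SG_FforBF} is a symmetric bilinear form satisfying \eqref{defSGT} with the prescribed value at $(1,1)$. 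The paper's approach has the advantage of \emph{deriving} the formula rather than confirming a guess; yours is slightly shorter, makes the single degree of freedom explicit from the outset, and your remark that positivity of $d\mu$ and $d\mu_1$ forces $\supp\mu_1\subset[0,\infty)$ fills in a point the paper leaves implicit when it writes all the integrals over $[0,\infty)$.
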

\begin{proof}
  It is clear that the value
  $[1,1]_1$ can be arbitrary. So, let us denote it by $s_0^*$, i.e. $s_0^*=[1,1]_1$.
Further, let us compute $[f,g]_1$ for any $f,g\in{\cP}$:
\[
\begin{split}
[f,g]_1=&[f(t)-f(0)+f(0),g(t)]_1=[f(t)-f(0),g(t)]_1+[f(0),g(t)]_1\\
=&[f(t)-f(0),g(t)]_1+[f(0),g(t)-g(0)]_1+[f(0),g(0)]_1\\
=&\left(\frac{f(t)-f(0)}{t},g(t)\right)+\left(f(0),\frac{g(t)-g(0)}{t}\right)+f(0)g(0)s_0^*\\
=&\int_{0}^{\infty}\frac{f(t)-f(0)}{t}g(t)d\mu(t)+\int_{0}^{\infty}f(0)\frac{g(t)-g(0)}{t}d\mu(t)+f(0)g(0)s_0^*.
\end{split}
\]
Next, using~\eqref{SG_h1} we arrive at
\[
[f,g]_1= \int_{0}^{\infty}\left(f(t)-f(0)\right)g(t)d\mu_1(t)+\int_{0}^{\infty}f(0)\left(g(t)-g(0)\right)d\mu_1(t)+f(0)g(0)s_0^*,
\]
which can be easily simplified to~\eqref{SG_FforBF}.
\end{proof}
To get an idea about the Geronimus transformation, let us consider one particular example of the initial inner product:
\[
 (f,g)_0=\int_0^{+\infty}f(t)g(t)t^\alpha e^{-t}dt,\quad \alpha>0.
\]
Clearly, one of the possible choices for the Geronimus transformation is the following bilinear form
\[
 [f,g]_1=\int_0^{+\infty}f(t)g(t)t^{\alpha-1} e^{-t}dt,\quad \alpha>0,
\]
that is the case where $s_0^*=\int_{0}^{\infty}t^{\alpha-1} e^{-t}dt$.
In this case the forms $(\cdot,\cdot)_0$ and $[\cdot,\cdot]_1$ generate the sequences of monic Laguerre polynomials
$\{L_n ^{(\alpha)}\}_{n=0}^{\infty}$ and $\{L_n ^{(\alpha-1)}\}_{n=0}^{\infty}$, respectively. These polynomials are related as follows
\[
 L_{n} ^{(\alpha)} (t)+ n L_{n-1}^{(\alpha)}(t)=L_{n} ^{(\alpha-1)}(t), \quad n=0,1,\dots.
\]
It turns out that a similar relation is also valid for the Geronimus transformation in general.

\begin{theorem}[cf.~\cite{Ger40}]
Let assume that $(\cdot,\cdot)_0$ and $[\cdot,\cdot]_1$ are positive definite and regular bilinear forms, respectively. Let $\{P_n\}_{n=0}^{\infty}$ be a sequence of monic polynomials
orthogonal with respect to $(\cdot,\cdot)_0$. Then a monic polynomial $P_n^*$ of degree $n$ is orthogonal with respect to $[\cdot,\cdot]_1$
if and only if it can be represented as follows
\begin{equation}\label{SG_hh1}
 P_n^*(t)=\frac{1}{d_n^*}
 \begin{vmatrix}
  P_n(t)&s_0^*P_n(0)+Q_n(0)\\
 P_{n-1}(t) &s_0^*P_{n-1}(0)+Q_{n-1}(0)
 \end{vmatrix},
\end{equation}
where $d_n^*=s_0^*P_{n-1}(0)+Q_{n-1}(0)\ne 0$. Here, $\{Q_n\}_{n=0}^{\infty}$ denotes the sequence of
monic orthogonal polynomials of the second kind with  $\deg Q_{n} = n-1$ and defined by $ Q_{n} (x)=\int_{0}^{\infty} \frac{P_n(t)-P_n(x)}{t-x}d\mu(t)$.
\end{theorem}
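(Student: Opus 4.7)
The plan is to split the orthogonality conditions $[P_n^*, t^k]_1 = 0$, $k = 0, 1, \dots, n-1$, into those with $k \geq 1$ (which reduce to classical orthogonality via the defining identity of the Geronimus transform) and the single remaining condition at $k = 0$ (which pins down the one free parameter).

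First I would use the defining relation $[tf(t),g(t)]_1 = (f,g)_0$ from \eqref{defSGT} to see that, for $k \geq 1$,
\[
[P_n^*, t^k]_1 = [P_n^*(t), t\cdot t^{k-1}]_1 = (P_n^*, t^{k-1})_0.
\]
Thus the conditions for $k = 1,\dots,n-1$ are equivalent to $(P_n^*, t^j)_0 = 0$ for $j = 0,\dots,n-2$. Since $P_n^*$ is a monic polynomial of degree $n$, expanding it in the basis $\{P_j\}_{j=0}^n$ forces
\[
P_n^*(t) = P_n(t) + \alpha_n P_{n-1}(t)
\]
for some real constant $\alpha_n$. So everything reduces to determining $\alpha_n$ from $[P_n^*, 1]_1 = 0$.

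Next I would compute $[P_k, 1]_1$ by the decomposition $P_k(t) = t\,\widetilde P_k(t) + P_k(0)$ with $\widetilde P_k(t) = (P_k(t) - P_k(0))/t$. Bilinearity together with $[t\widetilde P_k, 1]_1 = (\widetilde P_k, 1)_0$ yields
\[
[P_k, 1]_1 = (\widetilde P_k, 1)_0 + P_k(0)\,[1,1]_1 = \int \frac{P_k(t) - P_k(0)}{t}\,d\mu(t) + s_0^* P_k(0),
\]
and recognizing the integral as $Q_k(0)$ gives $[P_k, 1]_1 = s_0^* P_k(0) + Q_k(0)$ for every $k$.

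Applying this with $k = n$ and $k = n-1$, the condition $[P_n + \alpha_n P_{n-1}, 1]_1 = 0$ becomes
\[
\alpha_n = -\frac{s_0^* P_n(0) + Q_n(0)}{s_0^* P_{n-1}(0) + Q_{n-1}(0)} = -\frac{s_0^* P_n(0) + Q_n(0)}{d_n^*},
\]
which is well defined precisely when $d_n^* \neq 0$. Substituting this back into $P_n^* = P_n + \alpha_n P_{n-1}$ and re-expressing the result as a $2\times 2$ determinant immediately produces \eqref{SG_hh1}; the converse direction is then just the observation that any polynomial given by the determinantal formula is monic of degree $n$ (since the leading coefficient comes only from $d_n^* P_n(t)/d_n^*$) and satisfies both groups of orthogonality relations by construction. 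I expect the only slightly subtle step to be the computation of $[P_k, 1]_1$: it is the one place where the definition of $[\cdot,\cdot]_1$ has to be unpacked carefully so that the second-kind polynomials $Q_n$ enter the formula in the correct way.
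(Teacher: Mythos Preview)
Your argument is essentially the same as the paper's: split off the conditions $k\ge 1$ via \eqref{defSGT} to get $P_n^*=P_n+\alpha_n P_{n-1}$, then evaluate $[P_n^*,1]_1$ by the decomposition $P_k(t)=P_k(0)+t\cdot\frac{P_k(t)-P_k(0)}{t}$ to bring in $Q_k(0)$ and solve for $\alpha_n$. The only point you leave implicit is why $d_n^*=s_0^*P_{n-1}(0)+Q_{n-1}(0)\ne 0$; the paper deduces this from regularity of $[\cdot,\cdot]_1$, since the linear equation for $\alpha_n$ must then have a unique solution, forcing the coefficient of $\alpha_n$ to be nonzero---you should add that one sentence.
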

\begin{proof}
Since $[\cdot,\cdot]_1$ is regular there exists the corresponding sequence of monic orthogonal polynomials.
Suppose that $P_n^*$ is orthogonal, that is,
 \[
  [P_n^*(t),t^k]_1=[t^k,P_n^*(t)]_1=0,\quad k=0,1,2,\dots, n-1.
 \]
In turn, for the original bilinear form we have
\[
 (P_n^*(t),t^{k-1})_0=[P_n^*(t),t^k]_1=0,\quad k=0,1,2,\dots, n-1,
\]
which obviously implies that
\begin{equation}\label{SD_OP_repr}
 P_n^*(t)=P_n(t)+A_n P_{n-1}(t),
\end{equation}
where $A_n$ is a real number. Next, let us calculate $A_n$, $n\geq1$. To this end
we are going to use the following equation
\[
 \begin{split}
  0=&[P_n^*(t),1]=s_0^*P_n^*(0)+\left(\frac{P_n^*(t)-P_n^*(0)}{t},1\right)=\\
  =&s_0^*P_n^*(0)+\left(\frac{P_n(t)-P_n(0)}{t},1\right)+A_n\left(\frac{P_{n-1}(t)-P_{n-1}(0)}{t},1\right)\\
  =&s_0^*P_n^*(0)+\int_0^{\infty}\frac{P_n(t)-P_n(0)}{t}d\mu(t)+
A_n\int_0^{\infty}\frac{P_{n-1}(t)-P_{n-1}(0)}{t}d\mu(t)\\
  =&s_0^*(P_n(0)+A_nP_{n-1}(0))+Q_n(0)+A_nQ_{n-1}(0)\\
  =&s_0^*P_n(0)+Q_n(0)+A_n(s_0^*P_{n-1}(0)+Q_{n-1}(0)), n\geq1.
 \end{split}
\]
We see that the equation is equivalent to the orthogonality of the polynomial
$P_n+A_n P_{n-1}$ with respect to $[\cdot,\cdot]_1$. Hence, the equation
has a unique solution and, so, one has
\[
 s_0^*P_{n-1}(0)+Q_{n-1}(0)\ne 0.
\]
Furthermore, the unique solution of the above equation  is
\begin{equation}
 A_n=-\frac{s_0^*P_{n}(0)+Q_{n}(0)}{s_0^*P_{n-1}(0)+Q_{n-1}(0)},
\end{equation}
which leads us to formula~\eqref{SG_hh1}.
\end{proof}
Finally, it is worth mentioning that the Geronimus transformation can be also
considered in the case when $(\cdot,\cdot)_0$ is regular and is not necessarily positive definite~\cite{BM04}.
Moreover, necessary and sufficient conditions for the regularity of $[\cdot, \cdot]_{1}$ are analyzed in ~\cite{BM04, D13, DD11}.

\section{The structure of the transformed Jacobi matrix}

It is very well known that, assuming $[\cdot, \cdot]_{1}$ is positive definite, we can  associate with the sequence of monic orthogonal polynomials $\{P_n^{*}\}_{n=0}^{\infty}$ a monic tridiagonal Jacobi matrix
\begin{equation*}
J_{mon}^*=\begin{pmatrix}
{b}_{0}^*   & 1 &       &\\
({c}_{0}^*)^2   &{b}_1^*    &{1}&\\
        &({c}_1^*)^2    &{b}_{2}^* &\ddots\\
&       &\ddots &\ddots\\
\end{pmatrix}.
\end{equation*}
Recall that the entries of $J_{mon}^*$ are defined by the corresponding three-term recurrence
relation
\begin{equation}\label{recrelmonp}
t P_j^*(t) = P_{j+1}^*(t) + b_j^*P_j^*(t) + (c_{j-1}^*)^2P_{j-1}^*(t),\quad j\in\dZ_+,
\end{equation}
with the initial conditions
\[
P_{-1}^*(t)=0,\quad P_{0}^*(t) =1,
\]
where $b_j^*\in\dR$ and $c_j^*>0$, $j\in\dZ_+$.

Depending on circumstances it can be also convenient to consider a symmetric tridiagonal Jacobi matrix
\[
 J^*=\begin{pmatrix}
{b}_{0}^*   & {c}_{0}^* &       &\\
{c}_{0}^*   &{b}_1^*    &{c}_{0}^*&\\
        &{c}_1^*    &{b}_{2}^* &\ddots\\
&       &\ddots &\ddots\\
\end{pmatrix}=\Psi^{-1}J_{mon}^*\Psi,
\]
where $\Psi=\diag(1,c_0^*,c_0^*c_1^*,c_0^*c_1^*c_2^*,\dots)$. Indeed,   $J^*$ is the  matrix of
the multiplication operator with respect to the basis of orthonormal polynomials
\[
 \widehat{P}_n^*(t)=\frac{1}{h_n^*}P_n^*(t),\quad (h_n^*)^2=[P_n^*,P_n^*]_1,\quad h_n^*>0.
\]
In other words, we have the following representation
\[
 J^*=\left([t\widehat{P}_n^*(t),\widehat{P}_m^*(t)]_1\right)_{n,m=0}^{\infty}.
\]
Since $J^*$ corresponds to the Geronimus transformation, it has a special structure, which can be expressed
in terms of the coefficients $A_{n}$, $n= 1,2,\dots$, and the free parameter.
\begin{theorem}[cf.~\cite{BM04}]
Let us assume that $(\cdot,\cdot)_0$, $[\cdot,\cdot]_1$ are positive definite
and $\{P_n\}_{n=0}^{\infty}$ and $\{P_n^{*}\}_{n=0}^{\infty}$ are, respectively,  the corresponding sequences of monic orthogonal polynomials.
 Then the matrix $J^*$ admits the following Cholesky decomposition
 \begin{equation}\label{SD_Cholesky}
  J^*=LL^{\top},
 \end{equation}
where the bidiagonal lower triangular matrix $L$ has the form
\begin{equation}\label{SD_bidiag}
 L=\begin{pmatrix}
\frac{h_0}{s_0^*}& 0&       &&\\
A_1\frac{h_0}{s_0^*} & \frac{h_1}{\sqrt{A_1}h_0}   & 0 &&\\
        &\frac{A_2h_1}{\sqrt{A_1}h_0}   & \frac{h_2}{\sqrt{A_2}h_1} &0&\\
     &   &\frac{A_3h_2}{\sqrt{A_2}h_1}   & \frac{h_3}{\sqrt{A_3}h_2} &\ddots\\
&&       &\ddots &\ddots\\
\end{pmatrix}.
\end{equation}
\end{theorem}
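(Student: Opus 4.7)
The plan is to interpret the Cholesky factor $L$ as the transition matrix from the $(\cdot,\cdot)_0$-orthonormal basis $\{\widehat{P}_n\}$ to the $[\cdot,\cdot]_1$-orthonormal basis $\{\widehat{P}_n^*\}$. The defining identity $[tf,g]_1=(f,g)_0$ plays a double role: it produces the factorization at one stroke and, applied a second time, pins down the norms $h_n^*$.

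First I would apply the Darboux identity to the entries of $J^*$ in the orthonormal basis:
\[
J^*_{nm} \;=\; [t\widehat{P}_n^*,\widehat{P}_m^*]_1 \;=\; (\widehat{P}_n^*,\widehat{P}_m^*)_0.
\]
If $L$ is the lower triangular transition matrix defined by $\widehat{P}_n^*=\sum_{k\le n}L_{nk}\widehat{P}_k$, then orthonormality of $\{\widehat{P}_k\}$ with respect to $(\cdot,\cdot)_0$ immediately yields $J^*=LL^\top$. Thus the theorem reduces to computing $L$ explicitly and matching it with~\eqref{SD_bidiag}.

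The Geronimus relation $P_n^*=P_n+A_nP_{n-1}$ established in the preceding theorem shows at once that $L$ is lower bidiagonal, with
\[
L_{nn}=\frac{h_n}{h_n^*},\qquad L_{n,n-1}=\frac{A_n h_{n-1}}{h_n^*}.
\]
The only genuine calculation that remains is to express $h_n^*$ in terms of $h_{n-1}$ and $A_n$. For $n=0$ one has $(h_0^*)^2=[1,1]_1=s_0^*$ directly. For $n\ge 1$, using that $P_n^*$ is $[\cdot,\cdot]_1$-orthogonal to all polynomials of lower degree, together with the three-term recurrence $tP_{n-1}=P_n+b_{n-1}P_{n-1}+c_{n-2}^2P_{n-2}$ for $\{P_n\}$, I would write
\[
(h_n^*)^2 \;=\; [P_n^*,P_n^*]_1 \;=\; [P_n,P_n^*]_1 \;=\; [tP_{n-1},P_n^*]_1 \;=\; (P_{n-1},P_n^*)_0 \;=\; A_n h_{n-1}^2,
\]
where the penultimate step is the second use of $[tf,g]_1=(f,g)_0$ and the last uses the Geronimus expression for $P_n^*$ together with orthogonality of $\{P_n\}$.

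No deep obstacle is expected; the main work is clerical bookkeeping of normalizing constants, and the only real idea is to invoke the Darboux identity twice — once to recognise $J^*_{nm}$ as a $(\cdot,\cdot)_0$-Gram entry, and once to obtain the recursion $h_n^*=\sqrt{A_n}\,h_{n-1}$. Substituting this recursion into the formulas for $L_{nn}$ and $L_{n,n-1}$ recovers precisely the entries displayed in~\eqref{SD_bidiag}, with the free parameter $s_0^*$ entering through the first column via the normalization of $\widehat{P}_0^*$.
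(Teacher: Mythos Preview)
Your argument is correct and follows essentially the same route as the paper: both use the identity $[tf,g]_1=(f,g)_0$ to rewrite the entries of $J^*$ as $(\cdot,\cdot)_0$-Gram entries of the $P_n^*$, invoke $P_n^*=P_n+A_nP_{n-1}$ to obtain the bidiagonal Cholesky factor, and then compute $(h_n^*)^2=A_n h_{n-1}^2$ by a second application of the Darboux identity. The only cosmetic difference is that the paper carries the diagonal normalizations explicitly as matrix factors and obtains the norm recursion via $[P_{n+1}^*,P_{n+1}^*]_1=[tP_n^*,P_{n+1}^*]_1$, whereas you phrase $L$ as a change-of-orthonormal-basis matrix and reach the same recursion through $[P_n^*,P_n^*]_1=[tP_{n-1},P_n^*]_1$; these are equivalent.
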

\begin{proof}
We begin by noticing that
\begin{equation}\label{SD_mrepr_h1}
J^*=\begin{pmatrix}
\frac{1}{h_0^*}  & 0 &  \\
0   &\frac{1}{h_1^*}   &\ddots\\
     &\ddots &\ddots\\
\end{pmatrix}
\begin{pmatrix}
[tP_0^*(t),P_0^*(t)]_1  & [tP_0^*(t),P_1^*(t)]_1 &  \\
[tP_1^*(t),P_0^*(t)]_1   &[tP_1^*(t),P_1^*(t)]_1   &\ddots\\
     &\ddots &\ddots\\
\end{pmatrix}
\begin{pmatrix}
\frac{1}{h_0^*}  & 0 &  \\
0   &\frac{1}{h_1^*}   &\ddots\\
     &\ddots &\ddots\\
\end{pmatrix}.
\end{equation}
Since $[tP_n^*(t),P_m^*(t)]_1=(P_n^*(t),P_m^*(t))_0$, the symmetric tridiagonal matrix in the middle of the right hand side of~\eqref{SD_mrepr_h1}
reduces to
\[
\begin{split}
 \left((P_n^*,P_m^*)_0\right)_{n,m=0}^{\infty}=&\begin{pmatrix}
(P_0^*,P_0^*)_0  & (P_0^*,P_1^*)_0 &  0&\\
(P_1^*,P_0^*)_0   &(P_1^*,P_1^*)_0   &(P_1^*,P_2^*)_0&\\
      0  &(P_2^*,P_1^*)_0   &(P_2^*,P_2^*)_0 &\ddots\\
&       &\ddots &\ddots\\
\end{pmatrix}\\
=&\begin{pmatrix}
h_0^2  & A_1h_0^2 &  0&\\
A_1h_0^2  &h_1^2+A_1^2h_0^2   &A_2h_1^2&\\
      0  &A_2h_1^2   &h_2^2+A_2^2h_1^2&\ddots\\
&       &\ddots &\ddots\\
\end{pmatrix}\\
=&\begin{pmatrix}
h_0   & 0 &       &\\
{A}_{1}h_0   & h_1    &0 &\\
        &{A}_2h_1    &h_2 &\ddots\\
&       &\ddots &\ddots\\
\end{pmatrix}\begin{pmatrix}
{h}_{0}   & A_1h_0 &       &\\
{0}   &{h}_1    &{A}_2h_1&\\
        &0    &{h}_{2} &\ddots\\
&       &\ddots &\ddots\\
\end{pmatrix}
\end{split}
\]
in view of formula~\eqref{SD_OP_repr}.
Hence it is clear that~\eqref{SD_Cholesky} holds with
\begin{equation}\label{SD_mrepr_h2}
L=\begin{pmatrix}
\frac{1}{h_0^*}  & 0 &  \\
0   &\frac{1}{h_1^*}   &\ddots\\
     &\ddots &\ddots\\
\end{pmatrix}\begin{pmatrix}
h_0   & 0 &       &\\
{A}_{0}h_1   & h_1    &0 &\\
        &{A}_2h_1    &h_2 &\ddots\\
&       &\ddots &\ddots\\
\end{pmatrix}.
\end{equation}
Now, observe that
\[
\begin{split}
(h_{n+1}^*)^2=&[P_{n+1}^*,P_{n+1}^*]_1=[tP_{n}^*(t),P_{n+1}^*(t)]_1=(P_{n}^*,P_{n+1}^*)_0\\
=&(P_n+A_nP_{n-1},P_{n+1}+A_{n+1}P_n)_0\\
=&A_{n+1}h_n^2,
\end{split}
\]
which gives
\[
h_{n+1}^*=\sqrt{A_{n+1}}h_n^*.
\]
Combining this and $(h_{0}^*)^2=s_0^*$ we get that~\eqref{SD_mrepr_h2} can be easily simplified to~\eqref{SD_bidiag}.
\end{proof}

As a matter of fact, this statement is a trace of the fact that the Geronimus transformation can be interpreted in the matrix language (for details, see~\cite{BM04},
as well as ~\cite{DD11} for the non-regular case).

In order the paper to be self-contained a direct connection between the matrices   $J_{mon}$ and  $J_{mon}^{*}$ associated with the monic orthogonal polynomial sequences $\{P_n\}_{n=0}^{\infty}$, $\{P_n^{*}\}_{n=0}^{\infty}$, respectively, will be stated.
Let
\begin{equation}
 L_{mon}= \begin{pmatrix}
1 & 0&       &&\\
A_{1} & 1   & 0 &&\\
0  &A_2   & 1 &0&\\
   0  & 0  &A_3 &1 &\ddots\\
&&       &\ddots &\ddots\\
\end{pmatrix}
\end{equation}
be an infinite matrix such that   $ P^{*} = L_{mon} P$, where  $P^{*}= (P_{0}^{*}, P_{1}^{*}, \dots )^{\top}$  and
$P= (P_{0}, P_{1}, \dots )^{\top}.$ On the other hand, according to the Christoffel formula (see \cite{Chi})
or, equivalently, to the fact that
\[
[tP_n(t),P_m^*(t)]_1=(P_n(t),P_m^*(t))_0,\quad m=0,\dots, n-1,
\]
we get the relation
\[
tP_{n}(t)= P_{n+1}^*(t) + F_{n+1} P_{n}^*(t),\quad  F_{n+1}\neq0,\quad n\geq 0.
\]
In matrix terms,  we have that $t P = U_{mon}P^{*} $, where
\begin{equation}
U_{mon} ^{\top} = \begin{pmatrix}
F_1 & 0&       &&\\
1 & F_2   & 0 &&\\
0  &1   & F_3 &0&\\
   0  & 0 &1 &F_4 &\ddots\\
&&       &\ddots &\ddots\\
\end{pmatrix}
\end{equation}

Thus, we can state the following
\begin{theorem} We have that
 \begin{equation}
 J_{mon} = U_{mon} L_{mon},
\end{equation}
 \begin{equation}
 J_{mon}^{*}=  L_{mon }U_{mon}.
\end{equation}
\end{theorem}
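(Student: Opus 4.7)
The plan is to view everything through the column vectors $P=(P_0,P_1,\ldots)^{\top}$ and $P^{\ast}=(P_0^{\ast},P_1^{\ast},\ldots)^{\top}$, and then encode the three relevant relations among them as matrix identities acting on these infinite column vectors. Namely, the three--term recurrence for $\{P_n\}$ can be rewritten as the single vector identity $tP = J_{mon}P$, and similarly the three--term recurrence~\eqref{recrelmonp} for $\{P_n^{\ast}\}$ becomes $tP^{\ast}=J_{mon}^{\ast}P^{\ast}$. The representation~\eqref{SD_OP_repr} exactly gives $P^{\ast}=L_{mon}P$, and the Christoffel--type relation $tP_n = P_{n+1}^{\ast}+F_{n+1}P_n^{\ast}$ stated just above the theorem is $tP=U_{mon}P^{\ast}$.

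From this point the proof is essentially algebraic bookkeeping. First, substituting $P^{\ast}=L_{mon}P$ into $tP=U_{mon}P^{\ast}$ yields
\[
tP=U_{mon}L_{mon}P,
\]
and comparing with $tP=J_{mon}P$ gives the first factorization. Second, since $L_{mon}$ has scalar entries, multiplication by $t$ commutes with its action on polynomial vectors, so
\[
tP^{\ast}=t(L_{mon}P)=L_{mon}(tP)=L_{mon}U_{mon}P^{\ast},
\]
and comparing with $tP^{\ast}=J_{mon}^{\ast}P^{\ast}$ gives the second factorization. In both cases the passage from the vector identity to the matrix identity is justified because $\{P_n\}_{n=0}^{\infty}$ (respectively $\{P_n^{\ast}\}_{n=0}^{\infty}$) is a basis of $\cP$, so the coefficients in an expansion of $tP_n$ (respectively $tP_n^{\ast}$) with respect to this basis are uniquely determined.

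The only point that requires a small comment is that the infinite matrix products $U_{mon}L_{mon}$ and $L_{mon}U_{mon}$ are unambiguously defined: both $L_{mon}$ and $U_{mon}$ are banded (bidiagonal), so every entry of either product is a finite sum of at most two terms, and no convergence issue arises. I do not expect any genuine obstacle here; the statement is really a reformulation of the two previously observed identities $P^{\ast}=L_{mon}P$ and $tP=U_{mon}P^{\ast}$ in the spirit of discrete Darboux transformations, and the proof amounts to writing these identities side by side and eliminating $P^{\ast}$ in one case and $tP$ in the other.
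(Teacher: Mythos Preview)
Your proposal is correct and follows essentially the same approach as the paper: the paper's proof also substitutes $P^{\ast}=L_{mon}P$ into $tP=U_{mon}P^{\ast}$ to obtain $J_{mon}=U_{mon}L_{mon}$, and then writes $tP^{\ast}=L_{mon}\,tP=L_{mon}U_{mon}P^{\ast}$ to obtain $J_{mon}^{\ast}=L_{mon}U_{mon}$. Your additional remarks justifying the passage from the vector identities to the matrix identities via the basis property, and on the well-definedness of the banded infinite products, are not explicitly in the paper but make the argument more complete.
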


\begin{proof}

Notice that
\begin{equation}
  t  P=   U_{mon} P^{*}= U_{mon} L_{mon} P.
\end{equation}  Thus, one gets
\begin{equation}
 J_{mon} = U_{mon} L_{mon}.
\end{equation}
On the other hand, we se that
\begin{equation}
   t P^{*} = L_{mon} t P =  L_{mon} U_{mon} P^{*}.
\end{equation}
 As a consequence, we arrive at
\begin{equation}
  J_{mon}^{*}=  L_{mon } U_{mon}.
\end{equation}
\end{proof}

Thus, we have a simple proof of a very well known result  (see \cite{BM04}) in terms of a Darboux transformation
with parameter (see also~\cite{D13,DD11} for similar results in the non-regular case).

\section{The double Geronimus transformation and the Sobolev orthogonality}

In this section we present the double Geronimus transformation in the framework of symmetric bilinear forms.
Also, it is shown that this transformation leads to Sobolev inner products
and, therefore, to Sobolev orthogonal polynomials.

First, let us clarify what we mean by the double Geronimus transformation.
\begin{definition}\label{DGtrans}Let us consider a symmetric bilinear  form
$(\cdot,\cdot)_0$. The double Geronimus transformation of  $(\cdot,\cdot)_0$ is a symmetric bilinear form defined on the linear space $\mathcal P$ of polynomials with real coefficients  as follows
\begin{equation}\label{defGT}
[t^2f(t),g(t)]_2=[f(t),t^2g(t)]_2=(f,g)_{0}=\int_{\dR} f(t)g(t)d\mu(t), \quad f,g\in{\mathcal P}.
\end{equation}
\end{definition}

From~\eqref{defGT} one can see that the form $[\cdot,\cdot]_2$ is not uniquely defined. In particular, the symmetric matrix (since the form is symmetric)
\[
\begin{pmatrix}
 [1,1] &[1,t] \\
 [t,1]& [t,t]\\
\end{pmatrix}=
\begin{pmatrix}
 s_0^{**} &s_1^{**} \\
 s_1^{**}& s_2^{**}\\
\end{pmatrix}
\]
can be chosen arbitrarily. Despite this, one can see the structure of the double Geronimus transformation.

\begin{proposition}
 Suppose that $d\mu$ has the following representation
 \begin{equation}\label{DG_h1}
  d\mu(t)=t^2d\mu_2(t),
 \end{equation}
where $d\mu_2$ is a positive measure and it has finite moments. Then the double Geronimus transformation of $(\cdot,\cdot)_0$ admits
the representation
\begin{equation}\label{DG_FforBF}
 [f,g]_2=\int_{\dR}f(t)g(t)d\mu_2(t)+
 \begin{pmatrix} f(0)& f'(0)\\ \end{pmatrix}
M\begin{pmatrix} g(0)\\ g'(0)\\ \end{pmatrix},
\end{equation}
where the symmetric  matrix $M$ has the following form
\[
 M=\begin{pmatrix}
 s_0^{**} &s_1^{**} \\
 s_1^{**}& s_2^{**}\\
\end{pmatrix}-
 \begin{pmatrix}
 \int_{\dR}d\mu_2(t) &\int_{\dR}td\mu_2(t) \\
 \int_{\dR}td\mu_2(t)& \int_{\dR}t^2d\mu_2(t)\\
\end{pmatrix} .
\]
\end{proposition}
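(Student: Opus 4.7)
The plan is to mirror the proof of Proposition~\ref{SG_h2} but with a second-order Taylor expansion at the origin, since the relation $d\mu(t)=t^2 d\mu_2(t)$ allows one to divide by $t^2$ only after subtracting a linear part. The first step is to record the four free parameters $[1,1]_2=s_0^{**}$, $[1,t]_2=[t,1]_2=s_1^{**}$, $[t,t]_2=s_2^{**}$ and observe that, by definition~\eqref{defGT}, any value of the form $[t^2 p,q]_2$ or $[p,t^2 q]_2$ is already determined, equaling $(p,q)_0$.

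Next I would write, for arbitrary $f,g\in\cP$, the identities $f(t)=f(0)+f'(0)t+t^2\wt{f}(t)$ and $g(t)=g(0)+g'(0)t+t^2\wt{g}(t)$, with $\wt{f},\wt{g}\in\cP$. Bilinearity then expands $[f,g]_2$ into nine terms. The four ``corner'' terms collect into
\[
s_0^{**}f(0)g(0)+s_1^{**}\bigl(f(0)g'(0)+f'(0)g(0)\bigr)+s_2^{**}f'(0)g'(0).
\]
The four ``mixed'' terms are converted using the defining property: e.g.\ $[t^2\wt{f},1]_2=(\wt{f},1)_0=\int\wt{f}(t)\,d\mu(t)$, and by~\eqref{DG_h1} this equals $\int t^2\wt{f}(t)\,d\mu_2(t)=\int\bigl(f(t)-f(0)-f'(0)t\bigr)d\mu_2(t)$. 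The four mixed contributions are thus rewritten as integrals against $d\mu_2$ involving $f(0),f'(0),g(0),g'(0)$. The last term $[t^2\wt f,t^2\wt g]_2=(\wt f,t^2\wt g)_0=\int(t^2\wt f)(t^2\wt g)\,d\mu_2(t)$ gives $\int(f-f(0)-f'(0)t)(g-g(0)-g'(0)t)\,d\mu_2(t)$.

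The final step is pure bookkeeping: expand each of the integrands, collect the five integral contributions, and check that (i) the terms $\int fg\,d\mu_2$ survive with coefficient $1$, (ii) all integrals of the form $\int f\,d\mu_2$, $\int tg\,d\mu_2$, etc., cancel in pairs against the mixed terms, and (iii) the leftover bilinear expressions in $f(0),f'(0),g(0),g'(0)$ come out precisely as $-\bigl(m_k\bigr)$ times the appropriate product, where $m_k=\int t^k\,d\mu_2(t)$. Combining with the corner contributions produces $s_j^{**}-m_j$ in positions $(0,0),(0,1),(1,0),(1,1)$ of the boundary quadratic form, which is exactly the matrix $M$ in the statement.

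The main obstacle is not conceptual but organizational: keeping the nine bilinear pieces straight and verifying the cancellations symmetrically in $f$ and $g$. A convenient way to present it is to group the nine terms as a $3\times3$ array indexed by $(f(0),f'(0)t,t^2\wt f)$ versus $(g(0),g'(0)t,t^2\wt g)$ and process the ``corner'' $2\times2$ block, the two ``edge'' strips, and the single ``interior'' entry separately; the symmetry $[p,q]_2=[q,p]_2$ then halves the work.
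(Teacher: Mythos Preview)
Your proposal is correct and follows essentially the same approach as the paper: both use the first-order Taylor expansion $f(t)=f(0)+f'(0)t+t^2\wt f(t)$ to isolate a factor of $t^2$, invoke the defining relation~\eqref{defGT} to convert to $(\cdot,\cdot)_0$, and then apply $d\mu=t^2\,d\mu_2$. The only cosmetic difference is that the paper splits $f$ first and then $g$ (mirroring the asymmetric computation in Proposition~\ref{SG_h2}), whereas you expand both simultaneously into a $3\times3$ array; the underlying computation and cancellations are identical.
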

%Further, for simplicity suppose that $d\mu(t)=t^2d\nu(t)$, where $d\nu(t)$ is a measure with finite moments.
%In this case we have that
%\[
%[f,g]=(f,g)_{L^2(d\nu)}+\begin{pmatrix} f(0)& f'(0)\\ \end{pmatrix}
%\begin{pmatrix}
 %[1,1]-(1,1)_{L^2(d\nu)} &[1,t]-(1,t)_{L^2(d\nu)} \\
 %[t,1]-(t,1)_{L^2(d\nu)}& [t,t]-(t,t)_{L^2(d\nu)}\\
%\end{pmatrix}
%\begin{pmatrix} g(0)\\ g'(0)\\ \end{pmatrix}
%\]
%(in general, this formula does exist and just seems a bit uglier).
\begin{proof}
The proof is similar to that of Proposition~\ref{SG_h2}. First, we see that
\[
\begin{split}
 [f,g]_2=&[f(t)-f(0)-tf'(0),g(t)]_2+[f(0)+tf'(0),g(t)]_2\\
 =&\left(\frac{f(t)-f(0)-tf'(0)}{t^2},g(t)\right)+[f(0)+tf'(0),g(t)]_2.
 \end{split}
\]
Then, making use of the representation  $g(t)=g(t)-g(0)-tg'(0)+g(0)+tg'(0)$ and
taking into account~\eqref{DG_h1} we get the desired result~\eqref{DG_FforBF}.
\end{proof}

Since the symmetric matrix $M$ can be arbitrary,
from formula~\eqref{DG_FforBF} one can see that, in general, the double Geronimus transformation $[\cdot,\cdot]_2$
generates Sobolev type  inner products. In particular,  one recovers the positive diagonal Sobolev type inner products when
\[
M=\begin{pmatrix}
 \lambda_1 &0 \\
 0& \lambda_2\\
\end{pmatrix}, \quad \lambda_1\ge 0\text{ and } \lambda_2> 0.
\]
 At the same time, we also get that the double Geronimus transformation adds a matrix mass at the point in some sense.
Thus, the corresponding sequence of orthogonal polynomials, which are called in the literature  Sobolev type  orthogonal
polynomials, are not already standard scalar orthogonal  polynomials (the scalar Hankel structure of the Gram matrix
is destroyed by the perturbation)
but not yet essentially matrix orthogonal polynomials although it is convenient to consider them as matrix orthogonal
since the Gram matrix is in fact a $2\times 2$ block Hankel matrix
(see~\cite{AMRR} for some basic properties
of Sobolev type orthogonal polynomials).

Now we are in a position to give an explicit formula for the transformed polynomials $\{P_n^{**}\}_{n=0}^{\infty}$
orthogonal with respect to $[\cdot,\cdot]_2$ in terms of the original polynomials $\{P_n\}_{n=0}^{\infty}$.
\begin{theorem}
Let us assume that $(\cdot,\cdot)_0$ and $[\cdot,\cdot]_2$ are both positive definite and regular bilinear forms, respectively. Let $\{P_n\}_{n=0}^{\infty}$ be a sequence of monic polynomials
orthogonal with respect to $(\cdot,\cdot)_0$. Then a monic polynomial $P_n^{**}$ of degree $n$ is orthogonal with respect to $[\cdot,\cdot]_2$
if and only if it can be represented as follows
\footnotesize
\begin{equation}\label{DG_PolRepr}
 P_n^{**}(t)=\frac{1}{d_n^{**}}
 \begin{vmatrix}
  P_n(t)&R'_n(0;s_1^{**})+s_0^{**}P_n(0)&R_{n}(0;s_1^{**})+(s_2^{**}-s_0^{**})P'_{n}(0)\\
P_{n-1}(t) &R'_{n-1}(0;s_1^{**})+s_0^{**}P_{n-1}(0) & R_{n-1}(0;s_1^{**})+(s_2^{**}-s_0^{**})P'_{n-1}(0)\\
P_{n-2}(t)& R'_{n-2}(0;s_1^{**})+s_0^{**}P_{n-2}(0)&  R_{n-2}(0;s_1^{**})+(s_2^{**}-s_0^{**})P'_{n-2}(0)
 \end{vmatrix},
\end{equation}
\normalsize
where $R_n(t;s)=sP_{n}(t)+Q_{n}(t)$, $R'_n(t;s)=sP'_{n}(t)+Q'_{n}(t)$, and
\[
d_n^{**}=
 \begin{vmatrix}
R'_{n-1}(0;s_1^{**})+s_0^{**}P_{n-1}(0) & R_{n-1}(0;s_1^{**})+(s_2^{**}-s_0^{**})P'_{n-1}(0)\\
 R'_{n-2}(0;s_1^{**})+s_0^{**}P_{n-2}(0)&  R_{n-2}(0;s_1^{**})+(s_2^{**}-s_0^{**})P'_{n-2}(0)
 \end{vmatrix}
\]
is nonzero.
\end{theorem}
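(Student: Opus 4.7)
The plan is to mimic the single-Geronimus argument used in the proof of~(\ref{SG_hh1}), exploiting the defining relation $[t^2 f, g]_2 = (f,g)_0$ to convert as much $[\cdot,\cdot]_2$-orthogonality as possible into $(\cdot,\cdot)_0$-orthogonality, so that only two unknown coefficients remain.

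First I would observe that for $k\ge 2$ we have
\[
[P_n^{**}(t), t^k]_2 = [P_n^{**}(t), t^2\cdot t^{k-2}]_2 = (P_n^{**}, t^{k-2})_0,
\]
so the conditions $[P_n^{**}, t^k]_2 = 0$ for $k=2,\dots,n-1$ become $(P_n^{**}, t^j)_0 = 0$ for $j=0,\dots,n-3$. Since $P_n^{**}$ is monic of degree $n$, this forces
\[
P_n^{**}(t) = P_n(t) + B_n P_{n-1}(t) + C_n P_{n-2}(t)
\]
for some real scalars $B_n, C_n$, and only the two remaining constraints $[P_n^{**},1]_2=0$ and $[P_n^{**},t]_2=0$ have to be imposed.

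Next I would compute $[P,1]_2$ and $[P,t]_2$ for a generic polynomial $P$ by Taylor-expanding $P(t) = P(0) + tP'(0) + t^2 h(t)$ with $h\in\mathcal P$ and applying bilinearity together with the defining relation: the $t^2 h(t)$ piece becomes an integral against $d\mu$, while the constant and linear parts contribute $s_0^{**}, s_1^{**}, s_2^{**}$. The two integrals $\int h\,d\mu$ and $\int t h\,d\mu$ can be rewritten in terms of the second-kind polynomial $Q_P$ via the identities
\[
Q_P(0) = \int \frac{P(t)-P(0)}{t}\,d\mu(t),\qquad Q_P'(0) = \int \frac{P(t)-P(0)-tP'(0)}{t^2}\,d\mu(t),
\]
the second following by differentiating the defining integral for $Q_P(x)$. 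Regrouping and inserting the shorthand $R_n(\cdot;s) = sP_n + Q_n$ brings the two orthogonality conditions into the compact form
\[
\sum_{j=0}^{2} c_j\bigl[R'_{n-j}(0;s_1^{**})+s_0^{**}P_{n-j}(0)\bigr]=0,
\]
\[
\sum_{j=0}^{2} c_j\bigl[R_{n-j}(0;s_1^{**})+(s_2^{**}-s_0^{**})P'_{n-j}(0)\bigr]=0,
\]
where $c_0=1$, $c_1=B_n$, $c_2=C_n$.

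Third, this is a $2\times 2$ linear system in $(B_n,C_n)$ whose coefficient determinant is exactly $d_n^{**}$. When $d_n^{**}\neq 0$, Cramer's rule yields a unique solution, and substituting it back into $P_n^{**}=P_n+B_nP_{n-1}+C_nP_{n-2}$ produces, after expansion along the first column, the $3\times 3$ determinantal formula~(\ref{DG_PolRepr}). Conversely, if $P_n^{**}$ is defined by~(\ref{DG_PolRepr}), the structure of the determinant (two of its rows become proportional when paired with $[\,\cdot\,,1]_2$ or $[\,\cdot\,,t]_2$) immediately yields $[P_n^{**},1]_2=[P_n^{**},t]_2=0$, while $[P_n^{**},t^k]_2=0$ for $k\ge 2$ holds automatically since $P_n^{**}$ is a linear combination of $P_n,P_{n-1},P_{n-2}$. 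The necessity of $d_n^{**}\neq 0$ is part of the regularity hypothesis, since the existence of the monic orthogonal polynomial of degree $n$ requires the linear system to be solvable.

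The main technical obstacle is the bookkeeping in the second step: one has to track the four scalar quantities $P(0),\,P'(0),\,Q_P(0),\,Q_P'(0)$ through the Taylor-expansion identity and verify that the coefficients regroup precisely into the expressions $R'_{n-j}(0;s_1^{**})+s_0^{**}P_{n-j}(0)$ and $R_{n-j}(0;s_1^{**})+(s_2^{**}-s_0^{**})P'_{n-j}(0)$ appearing in the statement. Once these two linear combinations are identified, the rest of the argument is the same Cramer's rule / determinant expansion bookkeeping as in the classical single Geronimus case.
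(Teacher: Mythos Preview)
Your proposal is correct and follows essentially the same route as the paper: reduce $[\,\cdot\,,\cdot\,]_2$-orthogonality to $(\,\cdot\,,\cdot\,)_0$-orthogonality via the defining relation, obtain the three-term expansion $P_n^{**}=P_n+B_nP_{n-1}+C_nP_{n-2}$, compute $[P_k,1]_2$ and $[P_k,t]_2$ by Taylor expansion at $0$, and then cast the resulting $2\times 2$ system in determinantal form. The paper justifies the identity for $Q_n'(0)$ by an explicit limit and Lebesgue's dominated convergence theorem rather than by ``differentiating the defining integral,'' but the content is the same. One small point worth tightening: your argument that $d_n^{**}\neq 0$ follows from the system being \emph{solvable} is not quite right, since a singular system can still be consistent; the paper (and you should too) invokes \emph{uniqueness} of the monic orthogonal polynomial of degree $n$, which forces the solution of the linear system to be unique and hence $d_n^{**}\neq 0$.
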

\begin{proof}
 The orthogonality of $P_n^{**}$ is equivalent to the following condition
\[
[P_n^{**}(t),t^k]_2=0, \quad k=0,\dots,n-1,
\]
which for  $n\ge 3$ further reduces to
\[
(P_n^{**}(t),t^{k-2})_{0}=0, \quad k=2,\dots, n-1.
\]
The latter relation is obviously equivalent to the representation
\begin{equation}\label{DD_OP_repr}
 P_n^{**}(t)=P_n(t)+B_nP_{n-1}(t)+C_nP_{n-2}(t).
\end{equation}
Therefore one can see that the coefficients $B_n$ and $C_n$ are uniquely determined by the relations
\[
 \begin{split}
  [P_n^{**}(t),1]_2=&0,\\
  [P_n^{**}(t),t]_2=&0,
 \end{split}
\]
which can be rewritten as follows
\begin{equation}\label{DG_h2}
 \begin{split}
  [P_n(t),1]_2+B_n[P_{n-1}(t),1]_2+C_n[P_{n-2}(t),1]_2=& 0,\\
  [P_n(t),t]_2+B_n[P_{n-1}(t),t]_2+C_n[P_{n-2}(t),t]_2=& 0.
 \end{split}
\end{equation}
Since the monic orthogonal polynomial $P_n^{**}$ of degree $n$ is uniquely defined, the system~\eqref{DG_h2}
has a unique solution. Indeed, on the one hand, it is clear that the system has at least one solution because there exists a monic
orthogonal polynomial of degree $n$. On the other hand, if it has two different solutions then these solutions would give two different
monic orthogonal polynomials of degree $n$. The latter fact is not possible according the uniqueness of such a sequence. So, we conclude that
the determinant
\[
 d_n^{**}=\begin{vmatrix}
 [P_{n-1}(t),1]_2 &[P_{n-1}(t),t]_2\\
[P_{n-2}(t),1]_2 & [P_{n-2}(t),t]_2
 \end{vmatrix}
\]
is nonzero and  the orthogonality of $P_n^{**}$ is
equivalent to the representation
\[
 P_n^{**}(t)=\frac{1}{d_n^{**}}
 \begin{vmatrix}
  P_n(t)&[P_n(t),1]_2&[P_n(t),t]_2\\
P_{n-1}(t) &[P_{n-1}(t),1]_2 &[P_{n-1}(t),t]_2\\
P_{n-2}(t)&[P_{n-2}(t),1]_2 & [P_{n-2}(t),t]_2
 \end{vmatrix}.
\]
Now, to get formula~\eqref{DG_PolRepr} it remains to re-express the entries of the corresponding determinants. To this end,
for the last column one can get
\[
\begin{split}
 [P_n(t),t]_2=&[P_n(t)-P_n(0)-tP'_n(0),t]_2+P_n(0)[1,t]_2+P'_n(0)[t,t]_2\\
 =&\left(\frac{P_n(t)-P_n(0)-tP'_n(0)}{t^2},t\right)_0+s_1^{**}P_n(0)+s_2^{**}P'_n(0)\\
 =&\left(\frac{P_n(t)-P_n(0)-tP'_n(0)}{t},1\right)_0+s_1^{**}P_n(0)+s_2^{**}P'_n(0)\\
 =&Q_n(0)+s_1^{**}P_n(0)+(s_2^{**}-s_0^{**})P'_n(0)\\
 =& R_n(0;s_1^{**})+(s_2^{**}-s_0^{**})P'_n(0).
\end{split}
\]
Next, we also have that
\[
 \begin{split}
[P_n(t),1]_2=&[P_n(t)-P_n(0)-tP'_n(0),1]_2+P_n(0)[1,1]_2+P'_n(0)[t,1]_2\\
=&\left(\frac{P_n(t)-P_n(0)-tP'_n(0)}{t^2},1\right)_0+s_0^{**}P_n(0)+s_1^{**}P'_n(0).\\
 \end{split}
\]
On the other hand, notice that
\[
\begin{split}
 Q'_n(0)=&\lim_{\epsilon\to 0}\frac{Q_n(\epsilon)-Q_n(0)}{\epsilon}\\
 =&\lim_{\epsilon\to 0}\frac{1}{\epsilon}\left(\left(\frac{P_n(t)-P_n(\epsilon)}{t-\epsilon},1\right)_0
 -\left(\frac{P_n(t)-P_n(0)}{t},1\right)_0\right)\\
 =&\lim_{\epsilon\to 0}\frac{1}{\epsilon}\int_{\dR}\frac{\epsilon(P_n(t)-P_n(0))-t(P_n(\epsilon)-P_n(0))}{t(t-\epsilon)}d\mu(t)\\
 =&\int_{\dR}\frac{P_n(t)-P_n(0)-tP'_n(0)}{t^2}d\mu(t)\\
 =&\left(\frac{P_n(t)-P_n(0)-tP'_n(0)}{t^2},1\right)_0.
 \end{split}
\]
Note that we can interchange the limit and integral due to Lebesgue's dominated convergence theorem. Finally,  we  get
\[
 \begin{split}
[P_n(t),1]_2=& Q'_n(0)+s_0^{**}P_n(0)+s_1^{**}P'_n(0)\\
=& R'_n(0;s_1^{**})+s_0^{**}P_n(0),
 \end{split}
\]
which completes the proof.
\end{proof}

\section{The structure of the transformed pentadiagonal matrix}

In the case when the sequence of  polynomials $\{P_n^{**}\}_{n=0}^{\infty}$ is orthogonal with respect
to an inner product of the form~\eqref{DG_FforBF},
it is quite natural to consider the matrix representation of the square of the multiplication operator~\cite{Dur93,ELMMR}.
Indeed, according to formula~\eqref{DG_FforBF} the multiplication operator is not necessarily symmetric with
respect to $[\cdot,\cdot]_2$ and as a consequence one cannot apply some classical tricks in this case. Nevertheless,
the square of the multiplication operator is symmetric by the definition of the double Geronimus transformation.
Thus, the classical machinery works for this symmetric operator. So, assuming $[\cdot, \cdot]_{2}$ is positive definite, let us introduce the following symmetric matrix
\[
 J^{**}=\left([t^2\widehat{P}_n^{**}(t),\widehat{P}_m^{**}(t)]_2\right)_{n,m=0}^{\infty},
\]
where the corresponding orthonormal polynomials $\{\widehat{P}_n^{**}\}_{n=0}^{\infty}$ are given by
\[
 \widehat{P}_n^{**}(t)=\frac{1}{h_n^{**}}P_n^{**}(t),\quad (h_n^{**})^2=[P_n^{**},P_n^{**}]_2,\quad h_n^{**}>0.
\]
In fact, $J^{**}$ is pentadiagonal and the following statement holds true.
\begin{theorem}
Let us assume that $(\cdot,\cdot)_0$, $[\cdot,\cdot]_2$ are positive definite
and  $\{P_n\}_{n=0}^{\infty}$, $\{P_n^{**}\}_{n=0}^{\infty}$, respectively,  are the corresponding sequences of monic orthogonal polynomials.
Then the matrix $J^{**}$ admits the following Cholesky decomposition
 \begin{equation}\label{DD_Cholesky}
  J^{**}=LL^{\top},
 \end{equation}
where the lower triangular matrix $L$ has only three nonvanishing  diagonals and is of the form
\begin{equation}\label{DD_bidiag}
 L=\begin{pmatrix}
\frac{h_0}{h_0^{**}}& 0&       &&\\
B_1\frac{h_0}{h_0^{**}} & \frac{h_1}{h_1^{**}}   & 0 &&\\
C_2\frac{h_0}{h_0^{**}}  &B_2\frac{h_1}{h_1^{**}}   &\frac{h_2}{h_2^{**}} &0&\\
   0  & C_3\frac{h_1}{h_1^{**}}  &B_3\frac{h_2}{h_2^{**}}  &\frac{h_3}{h_3^{**}} &\ddots\\
&&       &\ddots &\ddots\\
\end{pmatrix},
\end{equation}
where the ratio $h_{n+2}/h_{n+2}^{**}$ can be expressed in terms of the coefficients $B_{n}$  and $C_{n}$, $n=1, 2,...$ of the linear combination  as follows
\begin{equation}\label{DG_mrepr_h1}
\frac{h_{n+2}}{h_{n+2}^{**}}=\frac{h_{n+2}}{\sqrt{C_{n+2}}h_n},\quad n=0,1,\dots.
\end{equation}
\end{theorem}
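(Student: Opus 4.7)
The proof will follow the same template as the proof of the Cholesky factorization in the single Geronimus case, but with one extra off-diagonal appearing because $P_n^{**}$ is a combination of three consecutive $P_j$'s rather than two. The plan is to extract the diagonal normalization, compute the Gram matrix $\bigl((P_n^{**},P_m^{**})_0\bigr)_{n,m}$ explicitly using $P_n^{**} = P_n + B_n P_{n-1} + C_n P_{n-2}$ and the orthogonality of $\{P_n\}$, recognize that this pentadiagonal matrix admits a tri-diagonal (three-band) lower-triangular square root, and finally identify $h_{n+2}^{**}$.

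The first step is to use the defining identity $[t^2 f,g]_2=(f,g)_0$ of the double Geronimus transformation to rewrite
\[
J^{**}=\mathrm{diag}\!\left(\tfrac{1}{h_n^{**}}\right)\bigl((P_n^{**},P_m^{**})_0\bigr)_{n,m=0}^\infty\mathrm{diag}\!\left(\tfrac{1}{h_m^{**}}\right).
\]
Expanding $(P_n^{**},P_m^{**})_0$ with the representation~\eqref{DD_OP_repr} and using $(P_i,P_j)_0=h_i^2\delta_{ij}$, the middle matrix becomes a five-diagonal symmetric matrix whose $(n,n)$-entry is $h_n^2+B_n^2 h_{n-1}^2+C_n^2 h_{n-2}^2$, whose $(n,n+1)$-entry is $B_{n+1}h_n^2+B_nC_{n+1}h_{n-1}^2$, and whose $(n,n+2)$-entry is $C_{n+2}h_n^2$. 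One then checks by direct multiplication that this pentadiagonal Gram matrix factors as $\tilde L\tilde L^\top$, where $\tilde L$ is the lower-triangular matrix with three nonzero diagonals: main diagonal $h_n$, sub-diagonal $B_{n+1}h_n$, and sub-sub-diagonal $C_{n+2}h_n$. This gives~\eqref{DD_Cholesky} with $L=\mathrm{diag}\!\left(\tfrac{1}{h_n^{**}}\right)\tilde L$, which is exactly the matrix displayed in~\eqref{DD_bidiag}, up to the identification of the last ratio.

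For formula~\eqref{DG_mrepr_h1}, the idea is to compute $(h_{n+2}^{**})^2=[P_{n+2}^{**},P_{n+2}^{**}]_2$ by exploiting orthogonality: since $P_{n+2}^{**}$ is monic of degree $n+2$ and orthogonal to all polynomials of lower degree, $[P_{n+2}^{**},P_{n+2}^{**}]_2=[P_{n+2}^{**},t^{n+2}]_2$. Rewriting $t^{n+2}=t^2\cdot t^n$ and applying~\eqref{defGT} yields $[P_{n+2}^{**},t^{n+2}]_2=(P_{n+2}^{**},t^n)_0=(P_{n+2}^{**},P_n)_0$, because $P_n$ and $t^n$ agree modulo lower-degree polynomials against which $P_{n+2}^{**}$ has already been shown to be $(\cdot,\cdot)_0$-orthogonal by the first part of the proof of the previous theorem. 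Substituting $P_{n+2}^{**}=P_{n+2}+B_{n+2}P_{n+1}+C_{n+2}P_n$ and using the orthogonality of $\{P_j\}$ collapses the inner product to $C_{n+2}h_n^2$, hence $h_{n+2}^{**}=\sqrt{C_{n+2}}\,h_n$, which is exactly~\eqref{DG_mrepr_h1}.

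The main obstacle is the bookkeeping in step~2: one must verify, with some care, that the explicit five-diagonal Gram matrix really does equal the product $\tilde L\tilde L^\top$ for the $\tilde L$ proposed. This amounts to checking three band identities (diagonal, first and second sub-diagonal entries of $\tilde L\tilde L^\top$ match those computed from the expansion of $(P_n^{**},P_m^{**})_0$), which is mechanical once the entries are written out. Everything else is a direct consequence of the defining property $[t^2\,\cdot\,,\,\cdot\,]_2=(\,\cdot\,,\,\cdot\,)_0$ together with the orthogonality of $\{P_n\}$ and $\{P_n^{**}\}$.
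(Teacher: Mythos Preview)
Your proposal is correct and follows essentially the same route as the paper: rewrite $J^{**}$ as $\mathrm{diag}(1/h_n^{**})\bigl((P_n^{**},P_m^{**})_0\bigr)\mathrm{diag}(1/h_m^{**})$, expand the middle Gram matrix via \eqref{DD_OP_repr} to obtain the explicit pentadiagonal form, factor it as $\tilde L\tilde L^\top$ with the three-banded $\tilde L$, and then compute $(h_{n+2}^{**})^2$ by reducing $[P_{n+2}^{**},P_{n+2}^{**}]_2$ to an $(\cdot,\cdot)_0$-inner product that collapses to $C_{n+2}h_n^2$. The only cosmetic difference is that the paper passes through $[t^2P_n^{**},P_{n+2}^{**}]_2=(P_n^{**},P_{n+2}^{**})_0$ in the last step, whereas you pass through $[P_{n+2}^{**},t^{n+2}]_2=(P_{n+2}^{**},t^n)_0$; both yield the same value.
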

\begin{remark}
It should be stressed that $h_0^{**}$ and $h_1^{**}$ can be parametrized by the free parameters:
\[
(h_0^{**})^2=s_0^{**},\quad (h_1^{**})^2=s_2^{**}+s_1^{**}\left(B_1-\frac{s_1}{s_0}\right).
\]
\end{remark}
\begin{proof}
Obviously, we have that
\footnotesize
\begin{equation}\label{DD_mrepr_h1}
J^{**}=\begin{pmatrix}
\frac{1}{h_0^{**}}  & 0 &  \\
0   &\frac{1}{h_1^{**}}   &\ddots\\
     &\ddots &\ddots\\
\end{pmatrix}
\begin{pmatrix}
[t^2P_0^{**}(t),P_0^{**}(t)]_2  & [t^2P_0^{**}(t),P_1^{**}(t)]_2 &  \\
[t^2P_1^{**}(t),P_0^{**}(t)]_2   &[t^2P_1^{**}(t),P_1^{**}(t)]_2   &\ddots\\
     &\ddots &\ddots\\
\end{pmatrix}
\begin{pmatrix}
\frac{1}{h_0^{**}}  & 0 &  \\
0   &\frac{1}{h_1^{**}}   &\ddots\\
     &\ddots &\ddots\\
\end{pmatrix}.
\end{equation}
\normalsize
Since $[t^2P_n^{**}(t),P_m^{**}(t)]_2=(P_n^*(t),P_m^*(t))_0$, the pentadiagonal matrix in the middle of the right
hand side of~\eqref{DD_mrepr_h1} reduces to
\footnotesize
\[
\begin{split}
 \left((P_n^{**},P_m^{**})_0\right)_{n,m=0}^{\infty}=&\begin{pmatrix}
(P_0^{**},P_0^{**})_0  & (P_0^{**},P_1^{**})_0 &  (P_0^{**},P_2^{**})_0&0&\\
(P_1^{**},P_0^{**})_0   &(P_1^{**},P_1^{**})_0   &(P_1^{**},P_2^{**})_0&(P_1^{**},P_3^{**})_0&\\
      (P_2^{**},P_0^{**})_0  &(P_2^{**},P_1^{**})_0   &(P_2^{**},P_2^{**})_0 &(P_2^{**},P_3^{**})_0&\ddots\\
      0&(P_3^{**},P_1^{**})_0  &(P_3^{**},P_2^{**})_0   &(P_3^{**},P_3^{**})_0 &\ddots\\
&    & \ddots  &\ddots &\ddots\\
\end{pmatrix}\\
=&\begin{pmatrix}
h_0^2 & B_1h_0^2 &  C_2h_0^2&0&\\
B_1h_0^2   &h_1^2+B_1^2h_0^2&B_2h_1^2+B_1C_2h_0^2&C_3h_1^2&\\
      C_2h_0^2  &B_2h_1^2+B_1C_2h_0^2  &h_2^2+B_2^2h_1^2+C_2^2h_0^2&B_3h_2^2+B_2C_3h_1^2&\ddots\\
      0&C_3h_1^2  &B_3h_2^2+B_2C_3h_1^2  &h_3^2+B_3^2h_2^2+C_3^2h_1^2 &\ddots\\
&    & \ddots  &\ddots &\ddots\\
\end{pmatrix}\\
=&\begin{pmatrix}
h_0   & 0 &       &&\\
{B}_{1}h_0   & h_1    &0 &&\\
  C_2h_0      &{B}_2h_1    &h_2 &0&\\
0&C_3h_1      &{B}_3h_2    &h_3 &\ddots\\
&      & \ddots&\ddots &\ddots\\
\end{pmatrix}\begin{pmatrix}
{h}_{0}   & B_1h_0 & C_2h_0      &&\\
{0}   &{h}_1    &{B}_2h_1&C_3h_1&\\
        &0    &{h}_{2} &B_3h_2&\ddots\\
        &&0&h_3&\ddots\\
&&       &\ddots &\ddots\\
\end{pmatrix}
\end{split}
\]
\normalsize
in view of formula~\eqref{DD_OP_repr}.
Hence it is clear that~\eqref{DD_Cholesky} holds with
\begin{equation}\label{DD_mrepr_h2}
L=\begin{pmatrix}
\frac{1}{h_0^{**}}  & 0 &  \\
0   &\frac{1}{h_1^{**}}   &\ddots\\
     &\ddots &\ddots\\
\end{pmatrix}\begin{pmatrix}
h_0   & 0 &       &&\\
{B}_{1}h_0   & h_1    &0 &&\\
  C_2h_0      &{B}_2h_1    &h_2 &0&\\
0&C_3h_1      &{B}_3h_2    &h_3 &\ddots\\
&      & \ddots&\ddots &\ddots\\
\end{pmatrix}.
\end{equation}
Thus we arrive at~\eqref{DD_bidiag} after simple computations.
Finally, it remains to see that
\[
\begin{split}
(h_{n+1}^{**})^2=&[P_{n+2}^{**},P_{n+2}^{**}]_2=[t^2P_{n}^{**}(t),P_{n+2}^*(t)]_2=(P_{n}^{**},P_{n+2}^{**})_0\\
=&(P_n+B_nP_{n-1}+C_nP_{n-2},P_{n+2}+B_{n+2}P_{n+1}+C_{n+2}P_n)_0\\
=&C_{n+2}h_n^2,
\end{split}
\]
which gives~\eqref{DG_mrepr_h1}.
\end{proof}

The next step will be to establish a direct connection between the matrices   $J_{mon}$ and  $J_{mon}^{**}$
associated with the monic orthogonal polynomial sequences $\{P_n\}_{n=0}^{\infty}$, $\{P_n^{**}\}_{n=0}^{\infty}$,
respectively.\\
Let
\begin{equation}
 L_{mon}= \begin{pmatrix}
1 & 0&       &&\\
B_1 & 1   & 0 &&\\
C_2  &B_2   & 1 &0&\\
   0  & C_3  &B_3 &1 &\ddots\\
&&       &\ddots &\ddots\\
\end{pmatrix}
\end{equation}
be an infinite matrix such that   $ P^{**} = L_{mon} P$, where  $P^{**}= (P_{0}^{**}, P_{1}^{**}, \dots )^{\top}$  and
$P= (P_{0}, P_{1}, \dots )^{\top}.$ At the same time, from the equality
\[
[ t^{2}P_{n}(t),P_{m}^{**}(t)]_{2}= [P_{n}(t),P_{m}^{**}(t)]_{0}= 0, \quad m= 0, 1, \dots , n-1,
\]
one concludes that
\[
t^{2}P_{n}(t) = P_{n+2}^{**}(t)+ D_{n+1}P_{n+1}^{**}(t) + E_{n+1}P_{n}^{**}(t),\quad E_{n+1}\neq 0,\quad
n= 0, 1, \dots.
\]
The above connection formula reads in a matrix form as $t^{2} P = U_{mon}P^{**} $, where

\begin{equation}
U_{mon} ^{\top} = \begin{pmatrix}
E_1 & 0 &&\\
D_1 & E_2 & 0 &&\\
1  & D_2  & E_3 & 0 &&\\
0  & 1 &   D_3 &E_4&0 &&\\
0  & 0 &   1&  D_4 & E_5&\ddots &&\\
&&     &\ddots &\ddots&\ddots\\
\end{pmatrix}
\end{equation}

Thus, we can state the following.
\begin{theorem} We have that
 \begin{equation}
 J_{mon}^{2} = U_{mon} L_{mon},
\end{equation}
 \begin{equation}
 J_{mon}^{**}=  L_{mon } U_{mon}.
\end{equation}
\end{theorem}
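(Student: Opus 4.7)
The plan is to mirror exactly the short proof given at the end of Section~3 for the tridiagonal case, using the two vector identities
\[
P^{**} = L_{mon}\, P, \qquad t^{2}P = U_{mon}\, P^{**},
\]
that are recorded immediately before the statement. The only new ingredient compared to Section~3 is the observation that, since $J_{mon}$ is by construction the matrix of multiplication by $t$ in the monic basis $\{P_n\}_{n=0}^{\infty}$, its square $J_{mon}^{2}$ is the matrix of multiplication by $t^{2}$ in the same basis, i.e.\ $t^{2}P = J_{mon}^{2}P$. Analogously, $J_{mon}^{**}$ (a pentadiagonal matrix, as already encoded in the Cholesky factor~\eqref{DD_bidiag}) is understood here as the matrix of multiplication by $t^{2}$ in the basis $\{P_n^{**}\}_{n=0}^{\infty}$, so that $t^{2}P^{**} = J_{mon}^{**}P^{**}$.

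For the first identity, I would substitute $P^{**} = L_{mon}P$ into $t^{2}P = U_{mon}P^{**}$ to obtain
\[
J_{mon}^{2}\,P \;=\; t^{2}P \;=\; U_{mon}\,P^{**} \;=\; U_{mon}L_{mon}\,P,
\]
and then invoke the linear independence of $\{P_n\}_{n=0}^{\infty}$ (which is a basis of $\mathcal{P}$) to conclude row by row that $J_{mon}^{2} = U_{mon}L_{mon}$.

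For the second identity, I would apply $L_{mon}$ to both sides of $t^{2}P = U_{mon}P^{**}$. Since the entries of $L_{mon}$ are real scalars, multiplication by $t^{2}$ commutes with application of $L_{mon}$, giving
\[
L_{mon}U_{mon}\,P^{**} \;=\; L_{mon}(t^{2}P) \;=\; t^{2}(L_{mon}P) \;=\; t^{2}P^{**} \;=\; J_{mon}^{**}\,P^{**},
\]
and, again using that $\{P_n^{**}\}_{n=0}^{\infty}$ is a basis, concluding $J_{mon}^{**} = L_{mon}U_{mon}$.

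There is no real technical obstacle: the proof reduces to a direct infinite-matrix manipulation once the two vector identities above are in hand, and both were derived in the paragraph preceding the theorem. The only mild point where care is needed is the identification of $J_{mon}^{**}$ with the matrix of multiplication by $t^{2}$ in the basis $\{P_n^{**}\}$; this is however implicit in the pentadiagonal structure of $J^{**}$ discussed earlier in Section~5 and in the five-term expansion $t^{2}P_{n}(t) = P_{n+2}^{**}(t) + D_{n+1}P_{n+1}^{**}(t) + E_{n+1}P_{n}^{**}(t)$ already used in the paragraph preceding the theorem to define $U_{mon}$.
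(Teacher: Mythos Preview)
Your proposal is correct and follows essentially the same argument as the paper: the paper also substitutes $P^{**}=L_{mon}P$ into $t^{2}P=U_{mon}P^{**}$ to obtain $J_{mon}^{2}=U_{mon}L_{mon}$, and then writes $t^{2}P^{**}=L_{mon}\,t^{2}P=L_{mon}U_{mon}P^{**}$ to obtain $J_{mon}^{**}=L_{mon}U_{mon}$. Your additional remarks on linear independence and on the identification of $J_{mon}^{**}$ with multiplication by $t^{2}$ in the monic basis only make explicit what the paper leaves implicit.
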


\begin{proof}

Notice that
\begin{equation}
  t^{2} P=   U_{mon} P^{**}= U_{mon} L_{mon} P.
\end{equation}  Thus, one sees that
\begin{equation}
 J_{mon}^{2} = U_{mon} L_{mon}.
\end{equation}
On the other hand, we have
\begin{equation}
   t^{2} P^{**} = L_{mon} t^{2} P =  L_{mon} U_{mon} P^{**}.
\end{equation}
 As a consequence, one gets
\begin{equation}
  J_{mon}^{**}=  L_{mon } U_{mon}.
\end{equation}

\end{proof}
Notice that this is the analogue for pentadiagonal matrices of the Darboux transformation with parameter
considered in Section 3. Moreover, the structure of the matrix representing the multiplication operator
by $t^{2}$ with respect to the orthonormal polynomial basis associated with the inner product $[\cdot , \cdot]_{2}$
is stated in terms of the corresponding UL factorization of the pentadiagonal matrix $J^{2}$.\\

{\Large\bf Acknowledgments}

The research of MD is supported by the European Research Council under the European Union  Seventh
Framework Programme (FP7/2007-2013)/ERC grant agreement no. 259173. The research of FM has been supported by Direcci\'on General de Investigaci\'on, Ministerio de Econom\'ia y Competitividad of Spain, grant MTM2012-36732-C03-01.

\end{document}